\newcommand{\KC}{Kolmogorov--Chentsov\xspace}
\DeclarePairedDelimiter{\floor}{\lfloor}{\rfloor}
\DeclarePairedDelimiter{\sawtooth}{\{}{\}}
\newcommand{\R}{\mathbb{R}}
\newcommand{\N}{\mathbb{N}}
\newcommand{\one}{\mathds{1}}
\newcommand{\ga}{\alpha}
\renewcommand{\gg}{\gamma}
\newcommand{\go}{\omega}
\newcommand{\gO}{\Omega}
\newcommand{\cA}{\mathcal{A}}
\newcommand{\cB}{\mathcal{B}}
\newcommand{\cF}{\mathcal{F}}
\newcommand{\cO}{\mathcal{O}}
\DeclareMathOperator{\E}{\mathbb{E}} 
\newcommand{\norm}[1]{\|#1\|}
\newtheorem{lemma}{Lemma}[section]
\newtheorem{proposition}[lemma]{Proposition}
\newtheorem{theorem}[lemma]{Theorem}
\theoremstyle{remark}
\newtheorem{rem}[lemma]{Remark}
\theoremstyle{definition}
\newtheorem{example}[lemma]{Example}
\newtheorem{definition}[lemma]{Definition}
\definecolor{darkblue}{rgb}{0,0,.8}
\definecolor{darkgreen}{rgb}{0,.6,0}
\begin{document}


\title[Kolmogorov--Chentsov theorem on manifolds]
{Kolmogorov--Chentsov theorem and differentiability of random fields on manifolds}

\author[R.~Andreev]{Roman Andreev}
\address[Roman Andreev]{
	\newline Seminar f\"ur Angewandte Mathematik
	\newline ETH Z\"urich
	\newline R\"amistrasse 101, CH--8092 Z\"urich, Switzerland
	\newline and
	\newline RICAM
	\newline Austrian Academy of Sciences
	\newline Altenberger Str.~69, A--4040 Linz, Austria.
}
\email[]{roman.andreev@oeaw.ac.at}

\author[A.~Lang]{Annika Lang}
\address[Annika Lang]{
	\newline Seminar f\"ur Angewandte Mathematik
	\newline ETH Z\"urich
	\newline R\"amistrasse 101, CH--8092 Z\"urich, Switzerland
	\newline and
	\newline Department of Mathematical Sciences
	\newline Chalmers University of Technology
	\newline SE--412 96 G\"oteborg, Sweden.
}
\email[]{annika.lang@chalmers.se}

\date{November 1, 2013}

\begin{abstract} 
	A version of the \KC theorem 
	on sample differentiability and H\"older continuity
	of random fields 
	on domains of cone type is proved,
	and 
	the result is generalized to manifolds.
\end{abstract}

\maketitle

\section{Introduction}\label{s:intro}

Sample regularity
of random fields and stochastic processes has been first studied by Kolmogorov in the 1930's, as reported in~\cite{S37}, and extended by Chentsov in~\cite{C56}. The Kolmogorov--Chentsov theorem states the existence of continuous modifications of stochastic processes and derives bounds on the exponent of sample H\"older continuity. This result has been extended in many ways, for example: for random fields on cubes~\cite{A10}, for random fields on the sphere~\cite{LS13}, for random fields on metric spaces~\cite{P09, HJ91}, and for random fields with values in more general spaces~\cite{PZ07, MS03}. For a review on the literature and the history of the problem we refer the reader to the introduction of~\cite{P09}. 

The first objective of this paper
is to extend these results to random fields on domains.
In fact, we also show sample differentiability
under suitable further assumptions on the random fields.
The second objective 
is to extend these results
to random fields on manifolds.

In contrast to H\"older continuity of stochastic processes and random fields, sample differentiability has hardly been studied so far, partly because the Brownian motion and the related
stochastic processes are almost surely nowhere differentiable. 
In recent years, however, the question of smoothness of random fields (beyond H\"older continuity) has become more important. In particular, higher spatial smoothness of solutions of stochastic partial differential equations can be exploited to improve the order of convergence of numerical algorithms. 
Other examples
include solutions of random partial differential equations as presented in~\cite{C12}, or lognormal random fields on the sphere in the modeling of ice crystals (cf.~\cite{NMF04,LS13}). Results on existence of sample differentiable modifications for stochastic processes are presented in~\cite{CL67} and~\cite{L78}. Furthermore, differentiability on~$\R^n$ of Gaussian random 
fields is discussed in~\cite{AT07} and first order sample derivatives are obtained in~\cite{P10} with a differential and integral calculus in quadratic mean. As a first approach to manifolds, existence of higher order derivatives of isotropic Gaussian random fields on the sphere is shown in relation with the decay of the corresponding angular power spectrum in~\cite{LS13}. To the best of our knowledge, this paper is the first to provide an extension of the Kolmogorov--Chentsov theorem to show sample differentiability of random fields on domains of cone type as well as on manifolds. 

The main technical device of our proof is the Sobolev embedding theorem,
as was sketched in~\cite[Proof of Theorem 3.4]{DPZ92}.
In general, 
it
provides a modification (in space) of the function in question,
and 
one would need to show that the resulting random field
is a modification of the original one.
We circumvent this complication by
showing existence of a
sample continuous modification first.
Our results are stated in terms of spaces of continuously differentiable functions of fractional order,
which characterize the order of differentiability and the ``remaining'' H\"older continuity of the highest order derivatives.

The paper is organized as follows. In Section~\ref{s:pre} we introduce the necessary basics on Sobolev spaces, Sobolev embeddings, manifolds, and random fields, as well as our notation. Section~\ref{s:main} contains our two main results, which state the existence of H\"older continuous and differentiable modifications of random fields on domains of cone type, and on sufficiently smooth manifolds. The proofs can be found in the same section.

\section{Preliminaries}
\label{s:pre}

In this preparatory section
we collect the notions required 
to obtain our main results in Section~\ref{s:main}
in the order that is needed later on. 
Therefore, we start with the introduction of Sobolev spaces on domains, and recall the appropriate variant of the Sobolev embedding theorem.
We move on to define manifolds,
and spaces of H\"older continuous and differentiable functions on them. 
Finally,
we introduce random fields on manifolds and associated properties.


We briefly recall the theory of (fractional) Sobolev spaces 
and spaces of H\"older continuous and differentiable 
functions on domains. 
For details, 
we refer the reader to 
the standard literature
\cite{Adams2003, Evans1998, Triebel1978}.

For any $t > 0$,
the integer part~$\floor{t} \in \N_0$ and the fractional part~$\sawtooth{t} \in [0, 1)$ of~$t$ 
are uniquely determined by $t = \floor{t} + \sawtooth{t}$.
For a multi-index $\alpha \in \N_0^n$ we set $|\alpha| := \sum_{i=1}^n \alpha_i$.

A subset $D \subset \R^n$ is called a \emph{domain}
if it is nonempty, open, and connected.
If $D$ is a domain, we define for $t \in \N_0$
  \begin{equation*}
   \|f\|_{\bar{C}^t(D)}
    := \sum_{|\ga|\le t} \sup_{x \in D} |\partial^\ga f(x)|,
  \end{equation*}
where $\partial^\ga := \partial^{|\ga|}/(\partial x_1^{\ga_1} \cdots \partial x_n^{\ga_n})$
is the classical partial derivative,
and for noninteger $t > 0$
  \begin{equation*}
   \norm{f}_{\bar{C}^t(D)}
    := 
    \norm{f}_{\bar{C}^{\floor{t}}(D)}
	+ 
	\sum_{|\ga| \le \floor{t}}
	\sup_{\substack{x,y \in D \\ x \neq y}}
	\frac{|\partial^\ga f(x) - \partial^\ga f(y)|}{|x-y|^{\sawtooth{t}}}.
  \end{equation*}
  %
  %
For $t > 0$ we define the \emph{H\"older spaces}
  \begin{equation*}
   \bar{C}^t(D)
    := 
    \left\{ 
    f : D \rightarrow \R ;
    f
    \text{ is $\floor{t}$ times continuously differentiable and }
    \|f\|_{\bar{C}^t(D)} < + \infty
    \right\}
    .
  \end{equation*}

The \emph{Lebesgue space~$L^p(D)$}, $p \in [1, \infty)$, comprises 
all measurable functions $u:D \rightarrow \R$ 
for which
$
	\|u\|_{L^p(D)}^p
	:= \int_D |u(x)|^p \, dx
$
is finite.
Functions that are equal for almost every $x \in D$ are identified.
%
For $p \in [1, \infty)$ and $k \in \N$,
the \emph{Sobolev space $W^k_p(D)$} is defined by
  \begin{equation*}
   W^k_p(D)
    := \{ u \in L^p(D) : \partial^\ga u \in L^p(D) \text{ for all } 0 \le |\ga| \le k\},
  \end{equation*}
  where $\partial^\ga$ denotes the distributional partial derivative.
  Equipped with the norm $\|\cdot\|_{W^k_p(D)}$ given by
  \begin{equation*}
   \|u\|_{W^k_p(D)}^p
    := \sum_{0 \le |\ga| \le k} \|\partial^\ga u\|_{L^p(D)}^p,
  \end{equation*}
it becomes a Banach space (see e.g.~\cite[Thm.~3.3]{Adams2003}).

Finally, we extend the definition of Sobolev spaces to nonintegers for bounded domains $D$ of cone type
following
\cite[Def.~4.2.3]{Triebel1978}.
As noted there,
examples of bounded domains of cone type 
include
open cubes
and
bounded domains with a smooth (or $C^1$) boundary.

\begin{definition}
	A bounded domain~$D$ is said to be of \emph{cone type}
	if there exist 
	domains $U_1, \ldots, U_m$,
	and cones $C_1, \ldots, C_m$,
	which may be carried over by rotations into the cone of height~$h$
	\[
		K_h := \{ x = (x',x_n) \in \R^n : 0 < x_n < h, |x'| < a x_n\}
	\]
	with fixed $a>0$ such that 
	$\partial D \subset \bigcup_{j=1}^m U_j$
	and 
	$(U_j \cap D) + C_j \subset D$ for all $j = 1, \ldots, m$.
\end{definition}

The Sobolev spaces of fractional smoothness 
are obtained by setting
  \begin{equation*}
   W^s_p(D) :=
    \{
			u \in W^{\floor{s}}_p(D)
			:
			\|u\|_{W^s_p(D)} < \infty
		\},
  \end{equation*}
%
%
where
\begin{equation*}
 \|u\|^p_{W^s_p(D)}
  := \|u\|_{L^p(D)}^p
    + \sum_{|\ga| = \floor{s}} \int_{D \times D} \frac{|\partial^\ga u(x) - \partial^\ga u(y)|^p}{|x-y|^{n+\sawtooth{s}p}} \, dx \, dy.
\end{equation*}
The defined norm~$\|\cdot\|_{W^s_p(D)}$ is equivalent to the norm induced by the real method of interpolation by Remark~4.4.2/2 in~\cite{Triebel1978}. Together with \cite[Theorem~4.6.1(e)]{Triebel1978}, we deduct the following theorem.
\begin{theorem} \label{t:Wsp-Ct}
	Let $D \subset \R^n$ be a bounded domain of cone type, 
	$1 < p < \infty$, and $t \ge 0$. 
	Then for all $s > t + n/p$ one has a continuous embedding
	$
		W^s_p(D) \hookrightarrow \bar{C}^t(D).
	$
	The embedding is still valid for $s = t + n/p$ if $t \notin \N_0$.
\end{theorem}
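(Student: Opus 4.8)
The plan is to identify the fractional Sobolev space $W^s_p(D)$ with a member of the real interpolation scale between integer-order Sobolev spaces, and then to read off the embedding from Theorem~4.6.1(e) of~\cite{Triebel1978}. The two facts already quoted---the norm equivalence of Remark~4.4.2/2 and the embedding theorem itself---do the heavy lifting, so the argument reduces to matching the concretely defined spaces $W^s_p(D)$ and $\bar{C}^t(D)$ with those used in~\cite{Triebel1978} and to tracking the smoothness and integrability parameters, with particular care at the limiting exponent $s = t + n/p$.

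For noninteger $s$, Remark~4.4.2/2 of~\cite{Triebel1978} asserts that $\|\cdot\|_{W^s_p(D)}$ is equivalent to the norm carried by the real interpolation space $(L^p(D), W^{\ceil{s}}_p(D))_{s/\ceil{s},\,p}$; since $D$ is of cone type and $1 < p < \infty$, this space coincides, up to equivalent norms, with the Besov space $B^s_{p,p}(D)$. For integer $s$ the space $W^s_p(D)$ is the classical Sobolev space, and the asserted embedding is the classical Sobolev embedding, likewise contained in Theorem~4.6.1(e). In either case I would invoke that theorem to obtain $W^s_p(D) \hookrightarrow \bar{C}^t(D)$ whenever the smoothness surplus $s - n/p$ dominates $t$.

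In the strict regime $s > t + n/p$ the argument is soft: setting $r := s - n/p > t$, one has a Sobolev-type embedding $B^s_{p,p}(D) \hookrightarrow B^r_{\infty,p}(D) \hookrightarrow B^r_{\infty,\infty}(D)$, and choosing any noninteger $r'$ with $t < r' < r$ gives $B^r_{\infty,\infty}(D) \hookrightarrow B^{r'}_{\infty,\infty}(D) = \bar{C}^{r'}(D) \hookrightarrow \bar{C}^t(D)$, the last step being immediate on a bounded domain. The main obstacle is therefore the borderline case $s = t + n/p$, and the reason the hypothesis there forces $t \notin \N_0$. In that case the sharpest available target is $B^t_{\infty,p}(D)$, which agrees with the classical H\"older space $\bar{C}^t(D)$ precisely when $t$ is noninteger; for integer $t$ one only reaches the strictly larger Zygmund class, whose defining second-difference seminorm does not reduce to the first-difference H\"older seminorm of $\bar{C}^t(D)$, so the embedding into $\bar{C}^t(D)$ breaks down. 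Confirming this identification of the limiting target with $\bar{C}^t(D)$, and that it holds exactly under $t \notin \N_0$, is the delicate point that the statement of Theorem~4.6.1(e) is designed to supply.
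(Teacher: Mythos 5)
Your proposal is correct and takes essentially the same route as the paper, which likewise deduces the theorem by combining the norm equivalence of Remark~4.4.2/2 in Triebel (identifying $W^s_p(D)$ with the real interpolation, i.e.\ Besov, scale) with the embedding of Theorem~4.6.1(e), including the noninteger-$t$ proviso at the borderline $s = t + n/p$. One small imprecision worth fixing: for noninteger $t$ the space $B^t_{\infty,p}(D)$ does not \emph{equal} $\bar{C}^t(D)$ but embeds into it (via $B^t_{\infty,p} \hookrightarrow B^t_{\infty,\infty} = \bar{C}^t$), which is all your argument needs.
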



We generalize the spaces of H\"older continuous and differentiable functions~$\bar{C}^t$ to manifolds by imposing these properties on charts.
Before doing so, 
we recapitulate the necessary geometric definitions and properties.
For more details on manifolds, we refer the reader to 
e.g.~\cite{Klingenberg1995,Lang1999,Lee2009,W87}.

%
%

If $A \subset \R^n$ is any subset, $m \in \N$, and $k \in \N \cup \{ 0, \infty \}$,
then $f : A \to \R^m$
is \emph{$k$ times continuously differentiable}
or \emph{of class $C^k$}
if for every $x \in A$
there exists an open $\cO_x \subset \R^n$ containing $x$
and
$g : \cO_x \to \R^m$ of class $C^k$
that coincides with $f$ on $A$.
Such $f$ are collected in $C^k(A; \R^m)$.
Looking ahead,
in order to avoid technicalities
we will only consider 
manifolds 
\emph{without manifold boundary},
such as the Euclidean space or a sphere therein.
The first step 
is the definition of an atlas.

\begin{definition}
	Let $M$ be a set, $r \in \N \cup \{ 0, \infty \}$, and $n \in \N$.
	A \emph{$C^r$ $n$-atlas~$\cA$ on $M$}
	is a collection of 
	\emph{charts} $(U_i, \varphi_i)$, $i \in I$,
	indexed by an arbitrary set $I$,
	satisfying the following:
	\begin{enumerate}
	\item
		$U_i \subset M$ and $\bigcup_{i \in I} U_i = M$,
	\item
		$\varphi_i : U_i \to \varphi_i(U_i) \subset \R^n$
		is a bijection
		and for any $i, j \in I$,
		$\varphi_i(U_i \cap U_j)$ is open in $\R^n$,
	\item
		$
			\varphi_i \circ \varphi_j^{-1} : 
			\varphi_j(U_i \cap U_j) \to \varphi_i(U_i \cap U_j)
		$
		is a $C^r$ diffeomorphism
		for any $i, j \in I$.
	\end{enumerate}
\end{definition}

In the following, 
we omit the dimension~$n$ 
in reference to an atlas.
Two $C^r$ atlases on a set~$M$
are called equivalent
if 
their union
is again a $C^r$ atlas on $M$.
This indeed defines an equivalence relation
on the $C^r$ atlases on $M$.
The union
of all atlases in
such an equivalence class
is again an atlas in the equivalence class,
called the \emph{maximal $C^r$ atlas}.
The topology on~$M$ induced by any maximal $C^r$ atlas
is the empty set together with arbitrary unions of 
its chart domains.

\begin{definition}
	Let $n \in \N$ and $r \in \N \cup \{ 0, \infty \}$.
	A \emph{$C^r$ $n$-manifold $M$}
	is a set $M \neq \emptyset$ 
	together with a maximal $C^r$ atlas $\cA(M)$
	such that the induced topology
	is Hausdorff and paracompact.
\end{definition}

Recall that in a Hausdorff topological space
distinct points have disjoint open neighborhoods,
and a topological space
is called paracompact
if
every open cover admits
a locally finite open cover 
(i.e., for any point, there is an open neighborhood 
which intersects only finitely many members of the collection)
which
refines the original cover.
Usually, 
the maximal atlas is not mentioned explicitly
and 
$M$ is understood to be equipped with the induced topology.
We will say ``a chart on $M$'' to refer to a chart
in $\cA(M)$.
Further, we say that an atlas $\cA$ on $M$
is an ``atlas for $M$''
if it is equivalent to $\cA(M)$.
Any open subset $U \subset M$ canonically inherits
the manifold structure of $M$.

\begin{definition}
	Let $M$ be a $C^r$ $n$-manifold
	and
	$k \geq 0$ an integer.
	A function $f : M \to \R$
	is said to be of class $C^k$,
	denoted by $f \in C^k(M)$,
	if $f \circ \varphi^{-1} \in C^k( \varphi(U) )$
	for every  chart $(U, \varphi)$ on~$M$.
	The \emph{support} $\mathop{\mathrm{supp}} f$ of $f \in C^0(M)$ is
	the closure of the set $\{ x \in M : f(x) \neq 0 \}$.

	For any $t \geq 0$,
	a function $f : M \to \R$ is said to be 
	\emph{continuous}
	(\emph{locally of class $\bar{C}^t$})
	if
	for any $x \in M$
	there exists an open connected subset $V \subset M$,
	$x \in V$,
	such that
	for any chart $(U, \varphi)$
	with
	$U \subset V$,
	the composite function
	$f \circ \varphi^{-1} : \varphi(U) \to \R$
is continuous
(of class $\bar{C}^t$).

\end{definition}

A useful technical device
is the partition of unity
defined next.

\begin{definition}
	Let $M$ be a $C^r$ $n$-manifold
	and let $\mathcal{U} = \{ U_i \}_{i \in I}$ be an open cover of $M$.
	%
	A \emph{$C^r$ partition of unity subordinate to $\mathcal{U}$}
	is 
	a collection
	$\{ \psi_i \}_{i \in I} \subset C^r(M)$
	such that
	\begin{enumerate}
	\item
		$0 \leq \psi_i(x) \leq 1$ for all $i \in I$ and $x \in M$,
	\item
		there exists
		a locally finite open cover
		$\{ V_i \}_{i \in I}$ of~$M$
		with
		$\mathop{\mathrm{supp}} \psi_i \subset V_i \cap U_i$,
		%
		%
	\item
		$\sum_{i \in I} \psi_i(x) = 1$ for all $x \in M$
		(where the sum is finite by the previous assertion).
	\end{enumerate}
\end{definition}

The assumed paracompactness of $M$
implies
the existence of such partitions of unity,
see 
\cite[Chapter II, Corollary 3.8]{Lang1999}
or
\cite[Theorem 1.73]{Lee2009}, which is stated in the following proposition.

\begin{proposition}\label{p:partof1}
	Let $M$ be a $C^r$ $n$-manifold.
	Let $\mathcal{U} = \{ U_i \}_{i \in I}$ be an open cover of $M$.
	Then there exists a $C^r$ partition of unity
	subordinate to $\mathcal{U}$.
\end{proposition}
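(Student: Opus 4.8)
\emph{Proof sketch.}
The plan is to reduce the construction to the Euclidean setting through charts and then to glue local bump functions by means of the local finiteness supplied by paracompactness. First I would use that $M$, being locally modeled on open subsets of $\R^n$, is locally compact and Hausdorff; combined with paracompactness and the fact that the chart domains cover $M$, this yields a locally finite open refinement $\{W_\alpha\}_{\alpha \in A}$ of $\mathcal{U}$ such that for each $\alpha$ there is a chart $(O_\alpha, \varphi_\alpha)$ on $M$ with $\overline{W_\alpha}$ compact and $\overline{W_\alpha} \subset O_\alpha \cap U_{j(\alpha)}$, where $j(\alpha) \in I$ records the member of $\mathcal{U}$ that $W_\alpha$ refines. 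The shrinking lemma for paracompact Hausdorff spaces then provides a further open cover $\{W'_\alpha\}_{\alpha \in A}$ with $\overline{W'_\alpha} \subset W_\alpha$ for all $\alpha$.

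Next I would manufacture the bump functions chart by chart. For each $\alpha$ the image $\varphi_\alpha(\overline{W'_\alpha})$ is a compact subset of the open set $\varphi_\alpha(W_\alpha) \subset \R^n$, so there is a $C^\infty$ (hence $C^r$) function on $\R^n$ that equals $1$ on the former and is supported in the latter. Pulling it back by $\varphi_\alpha$ and extending it by zero outside $O_\alpha$ yields $g_\alpha \in C^r(M)$ with $g_\alpha > 0$ on $W'_\alpha$ and $\operatorname{supp} g_\alpha \subset \overline{W_\alpha} \subset U_{j(\alpha)}$; the extension is genuinely of class $C^r$ in the sense of the definition because $\operatorname{supp} g_\alpha$ is a closed subset of the chart domain $O_\alpha$. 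As $\{W_\alpha\}$ is locally finite, the function $\sigma := \sum_{\alpha} g_\alpha$ is locally a finite sum and thus belongs to $C^r(M)$, and it is strictly positive everywhere since $\{W'_\alpha\}$ covers $M$. Hence $\tilde\psi_\alpha := g_\alpha / \sigma$ defines a $C^r$ partition of unity subordinate to $\{W_\alpha\}$, indexed by $A$.

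Finally I would re-index onto $I$ to match the statement by setting $\psi_i := \sum_{\alpha : j(\alpha) = i} \tilde\psi_\alpha$ for each $i \in I$, with the convention that an empty sum is $0$. Properties (1) and (3) are immediate from the corresponding properties of the $\tilde\psi_\alpha$. For (2), one checks that the collapsed family $\{ \bigcup_{j(\alpha) = i} W_\alpha \}_{i \in I}$ is still locally finite, because a neighborhood meeting only $W_{\alpha_1}, \dots, W_{\alpha_k}$ meets only the finitely many collapsed sets indexed by $j(\alpha_1), \dots, j(\alpha_k)$; a suitable enlargement of these sets serves as the required locally finite open cover $\{V_i\}_{i \in I}$ with $\operatorname{supp} \psi_i \subset V_i \cap U_i$. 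I expect the main obstacle to lie precisely in this bookkeeping: arranging the refinement so that local finiteness, chart containment with compact closure, and the existence of a shrinking hold simultaneously, and then confirming that collapsing the possibly uncountable fibers $j^{-1}(i)$ preserves local finiteness, so that each $\psi_i$ is a well-defined $C^r$ function and the sum in (3) remains locally finite at every point.
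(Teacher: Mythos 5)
Your construction is correct, but note that the paper does not actually prove this proposition: it invokes paracompactness and cites \cite[Chapter II, Corollary 3.8]{Lang1999} and \cite[Theorem 1.73]{Lee2009}, treating the statement as standard. What you have written is, in essence, the textbook argument behind those citations: refine the cover by chart neighborhoods with compact closure, extract a locally finite refinement, shrink, build Euclidean bump functions, pull back, normalize by the locally finite sum, and collapse the index set back onto $I$. All the key points are in place, including the two that are most often fumbled: that the collapsed family $\{\bigcup_{j(\alpha)=i} W_\alpha\}_{i \in I}$ remains locally finite, and that each $\psi_i = \sum_{j(\alpha)=i} \tilde\psi_\alpha$ is a locally finite sum and hence $C^r$. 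One spot where your wording should be tightened: to conclude that the zero-extension $g_\alpha$ is $C^r$ on all of $M$, it is not enough that $\operatorname{supp} g_\alpha$ is closed \emph{in the subspace} $O_\alpha$ (a set can be relatively closed while accumulating on the edge of the chart, and then the extension need not even be continuous); what you need, and what your setup delivers, is that $\operatorname{supp} g_\alpha$ is compact --- being a closed subset of the compact set $\overline{W_\alpha} \subset O_\alpha$ --- hence closed in $M$, so that $O_\alpha$ and $M \setminus \operatorname{supp} g_\alpha$ form an open cover on which $g_\alpha$ is $C^r$. Also, in your last step no ``enlargement'' is needed: $V_i := \bigcup_{j(\alpha)=i} W_\alpha$ already works, since local finiteness makes $\bigcup_{j(\alpha)=i} \operatorname{supp} \tilde\psi_\alpha$ closed, whence $\operatorname{supp} \psi_i \subset V_i \cap U_i$ directly. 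Compared with the paper, your route buys a self-contained proof valid for any $C^r$ manifold as defined there (including $r = 0$ and $r = \infty$, and without any second-countability assumption), at the cost of the bookkeeping you correctly identify as the delicate part; the paper's citation buys brevity.
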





We close the preparatory section
by introducing
random fields on manifolds.
Random fields on domains are defined accordingly. 
In what follows, let $(\gO, \cF, P)$ be a probability space.
\begin{definition}
Let $M$ be a $C^r$ $n$-manifold and let $\cB(M)$ denote its Borel $\sigma$-algebra.
A mapping $X:\Omega \times M \rightarrow \R$ 
that is $(\cF \otimes \cB(M))$-measurable 
is called a \emph{(real-valued) random field} on the manifold~$M$.
A random field $Y$ is 
a \emph{modification} of 
a random field $X$ if $P(X(x) = Y(x)) = 1$ for all $x \in M$.
For any $t \geq 0$,
a random field $X$ on $M$ is said to be 
\emph{continuous} (\emph{locally of class $\bar{C}^t$})
if
$X(\omega)$ is continuous (locally of class $\bar{C}^t$)
for all $\omega \in \Omega$.
\end{definition}
We note that 
if $M$ is endowed with a metric 
(say, given by a Riemannian metric),
and the resulting metric space is separable and locally compact,
then
measurability of $X(x)$ for all $x \in M$
and
continuity in probability of $X$
imply
$(\cF \otimes \cB(M))$-measurability of $X$
(cf.~\cite{P09_1}).

\section{H\"older continuity and differentiability of random fields}\label{s:main}

This section
contains our main results
on 
H\"older continuity and differentiability of random fields.
We begin by considering random fields on domains of cone type. 
As indicated in the introduction,
results on sample H\"older continuity 
on different types of domains
are well-known
(see, e.g., \cite{A10,DPZ92,P09,K02}), 
but
sample differentiability has not been of main interest so far
(see, however, \cite{AT07,P10} for the available results). 
We prove sample 
H\"older continuity and differentiability properties
in Theorem~\ref{t:KC-D}
by revisiting
the approach of~\cite[Proof of Theorem 3.4]{DPZ92}
via 
the Sobolev embedding theorem.
We then address
sample H\"older and differentiability properties 
of random fields on manifolds
in Theorem~\ref{t:KC-M}.


We now state our 
version of the \KC theorem on domains of cone type.

\begin{theorem}
\label{t:KC-D}
 Let $D \subset \R^n$ be a bounded domain of cone type 
 and
 let
 $X : \Omega \times D \to \R$ be a random field on~$D$.
 Assume that there exist $d \in \N_0$, $p>1$, $\epsilon \in (0, p]$, and $C>0$ such that
 the weak derivatives $\partial^\ga X$ are in $L^p(\gO \times D)$
 and
  \begin{equation}
   \label{e:t:KC-D:E}
   \E( |\partial^\ga X(x) - \partial^\ga X(y)|^p)
    \le C \, |x-y|^{n + \epsilon}
  \end{equation}
 for all $x,y \in D$ and
 any multi-index $\ga \in \N_0^n$ with $|\ga| \leq d$.
 Then $X$ has a modification that is locally of class $\bar{C}^t$ for all $t < d + \min\{\epsilon/p,1-n/p\}$.
\end{theorem}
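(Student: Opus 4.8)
The plan is to use the Sobolev embedding theorem (Theorem~\ref{t:Wsp-Ct}) to upgrade the moment bound~\eqref{e:t:KC-D:E} into sample regularity, following the route sketched in~\cite{DPZ92}. The key observation is that the hypothesis~\eqref{e:t:KC-D:E} is precisely a Gagliardo--Slobodeckij seminorm estimate in expectation: for a fixed multi-index $\ga$ with $|\ga| \le d$ and any $s' \in (0,1)$, I would form the random variable
\begin{equation*}
	\int_{D \times D} \frac{|\partial^\ga X(x) - \partial^\ga X(y)|^p}{|x-y|^{n + s'p}} \, dx \, dy
\end{equation*}
and take expectations. By Tonelli's theorem and~\eqref{e:t:KC-D:E}, its expectation is bounded by $C \int_{D \times D} |x-y|^{\epsilon - s'p} \, dx \, dy$, which is finite whenever $\epsilon - s'p > -n$, i.e.\ whenever $s' < (\epsilon + n)/p$; since $D$ is bounded this integral converges. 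Hence for every such $s'$ the quantity above is finite almost surely, which together with the assumed integrability $\partial^\ga X \in L^p(\gO \times D)$ (giving $\partial^\ga X(\go) \in L^p(D)$ for a.e.\ $\go$) shows that $X(\go) \in W^{d + s'}_p(D)$ for almost every $\go$.

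Next I would combine this with the embedding. Setting $s := d + s'$ and choosing $t < d + \min\{\epsilon/p, 1 - n/p\}$, I need $s > t + n/p$ with $s' < (\epsilon+n)/p$ and $s'<1$. Writing $t = d + \tau$ with $\tau < \min\{\epsilon/p, 1-n/p\}$, the requirement $s' > \tau + n/p$ is compatible with $s' < 1$ (since $\tau + n/p < 1$) and with $s' < (\epsilon+n)/p$ (since $\tau + n/p < \epsilon/p + n/p$). Thus I can pick $s'$ strictly between $\tau + n/p$ and $\min\{1, (\epsilon+n)/p\}$, so that $s = d+s' > t + n/p$. Theorem~\ref{t:Wsp-Ct} then gives $W^s_p(D) \hookrightarrow \bar{C}^t(D)$, and therefore $X(\go) \in \bar{C}^t(D)$ for almost every $\go$; redefining $X$ to be $0$ on the null set where this fails produces a version whose every sample path lies in $\bar{C}^t(D)$.

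The subtle point, and the reason the paper emphasizes it in the introduction, is that the Sobolev embedding only identifies $X(\go)$ with a $\bar{C}^t$ representative of its equivalence class in $L^p(D)$, and a priori the continuous representative might differ from the measurable field $X$ on a spatial null set that depends on $\go$. I therefore cannot simply declare the embedded function to be a modification. The way I would resolve this is to first establish, by the classical \KC argument applied to the $d=0$ case (or by exhibiting a sample-continuous modification directly), the existence of a continuous modification $\tilde X$ of $X$; then $\tilde X$ agrees with $X$ in the modification sense, and for the continuous representative the pointwise values are canonically determined, so the $\bar{C}^t$ embedding applied to $\tilde X$ genuinely yields a modification. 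This circumvention is the main obstacle and is exactly the strategy announced in the excerpt.

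Finally, since the statement asserts only that $X$ is \emph{locally} of class $\bar{C}^t$, and since a bounded domain of cone type already supports the global embedding, the local statement follows immediately: for each $x \in D$ one may take $V = D$ (or a small cone-type neighborhood), and the composite with the identity chart is in $\bar{C}^t$. I would close by noting that the set of admissible $t$ is an open interval and the argument works for every such $t$ simultaneously by taking a countable increasing sequence $t_k \uparrow d + \min\{\epsilon/p, 1-n/p\}$ and intersecting the corresponding almost-sure events, so that a single modification works for all $t$ below the threshold.
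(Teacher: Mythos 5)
Your proposal is correct and follows essentially the same route as the paper: first secure a sample-continuous modification, then bound $\E\bigl(\|X\|_{W^{d+\nu}_p(D)}^p\bigr)$ via Tonelli/Fubini and \eqref{e:t:KC-D:E}, invoke the embedding of Theorem~\ref{t:Wsp-Ct}, and use continuity of the samples to conclude that the embedded $\bar{C}^t$ representative coincides with the field itself, exactly the circumvention the paper announces. The only detail to flag is that your appeal to ``the classical \KC argument'' for the continuous modification needs, on a general cone-type domain rather than a cube, the metric-entropy (totally bounded pseudometric space) version of the theorem, which is precisely what the paper's Lemma~\ref{l:K-D} supplies via \cite[Theorem~2.3.1]{K02}.
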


We remark that 
\eqref{e:t:KC-D:E} with $\epsilon > p$ for $\alpha = 0$ would imply 
that 
almost every sample of 
the random field is actually a constant function
(cf.~\cite[Proposition~2]{Br02}).

The proof is given in two steps.
In the following lemma
we first obtain a continuous modification of~$X$
based on~\cite[Theorem~2.3.1]{K02},
which we again denote by~$X$.
In the second step,
we prove Theorem~\ref{t:KC-D}
by
invoking the Sobolev embedding on~$X(\omega)$
for all $\go \in \gO$.
Since $X(\omega)$ is continuous for all $\go \in \gO$,
this does not modify the random field,
and there is no need to prove measurability
of a modified field and that it is actually a modification.
Alternatively,
one could use the last step of \cite[Proof of Theorem 2]{Schilling2000-ExpoMath}. 

Let us start by showing the existence of a continuous modification.

\begin{lemma} \label{l:K-D}
	Under the assumptions of Theorem~\ref{t:KC-D},
	$X$ admits a continuous modification.
\end{lemma}

\begin{proof}
 Observe that 
 the
 domain~$D$ 
 equipped with the usual Euclidean metric $|\cdot - \cdot|$
 is a totally bounded pseudometric space in the sense of~\cite{K02}; 
 indeed, its metric entropy 
 $\mathsf{D}(\delta) := \mathsf{D}(\delta; {D}, |\cdot - \cdot|)$ 
 is bounded by $\mathsf{D}(\delta) \le \tilde{C} \delta^{-n}$ 
 for all $\delta > 0$ and some constant $\tilde{C}>0$, 
 since the domain ${D}$ 
 can be embedded into a $n$-dimensional closed cube of finite diameter.
 We set $\Psi(r) := C r^{n+\delta}$, 
 where the constant~$C$ is provided by~\eqref{e:t:KC-D:E},
 and $f(r) := r^{\delta/p}$ for $r \ge 0$. 
 The integrals
   $\int_0^1 r^{-1} f(r) \, dr
    = p/\delta
    $
 as well as
   $\int_0^1 \mathsf{D}(r) \Psi(2r) f(r)^{-p} \, dr
    \le C \tilde{C} 2^{n+\delta}
  $
 are finite.
 Therefore, \cite[Theorem~2.3.1]{K02} shows the existence of a continuous modification of~$X$.
%
\end{proof}

Having obtained a continuous modification, we are set to continue with the proof of Theorem~\ref{t:KC-D}.

\begin{proof}[Proof of Theorem~\ref{t:KC-D}]
 Assume without loss of generality
 that the random field $X$ is continuous
 (otherwise apply Lemma~\ref{l:K-D}).
 Consider arbitrary $0<\nu<\min\{(n+\epsilon)/p,1\}$ and $\ga \in \N_0^n$ with $|\ga|=d$.
 Since
  \begin{equation*}
   (\go,x,y) \mapsto
    \frac{|\partial^\ga X(\go,x) - \partial^\ga X(\go,y)|^p}{|x-y|^{n + \nu p}}
  \end{equation*}
 is $(\cF \otimes \cB(D\times D))$-measurable, 
 we apply Fubini's theorem and hypothesis \eqref{e:t:KC-D:E} to obtain
  \begin{align*}
	  \E \left(
	    \int_{D \times D} \frac{|\partial^\ga X(x)-\partial^\ga X(y)|^p}{|x-y|^{n + \nu p}} \, dx \, dy
	  \right)
    & = \int_{D \times D} \frac{\E(|\partial^\ga X(x)-\partial^\ga X(y)|^p)}{|x-y|^{n + \nu p}} \, dx \, dy
    \\
    & \le C \int_{D \times D} |x-y|^{n + \epsilon - (n + \nu p)} \, dx \, dy
    .
  \end{align*}
 The last integral is finite due to $\epsilon - \nu p > -n$.
 With the $L^p(\gO\times D)$ integrability assumptions
 on $X$ and its derivatives of order $d$,
 this implies 
 that
  \begin{align*}
   \E ( \|X\|_{W^{d+\nu}_p(D)}^p )
    & = \E( \|X\|_{L^p(D)}^p)
      + 
    \sum_{|\ga| = d}
	  \E \left(
	    \int_{D \times D} 
	    \frac
	    {|\partial^\ga X(x)-\partial^\ga X(y)|^p}
	    {|x-y|^{n + \nu p}}
	    \, dx \, dy
	  \right)
  \end{align*}
  is finite,
 and therefore there exists $\Omega' \in \cF$ with $P(\Omega') = 1$ 
such that $X(\omega) \in W^{d+\nu}_p(D)$ for all $\omega \in \Omega'$. 
Consider as continuous modification of~$X$ the random field
$\tilde{X} := \one_{\Omega'} X$, where $\one_{\Omega'}$ is the indicator function of~$\Omega'$.
By the Sobolev embedding theorem~\ref{t:Wsp-Ct}, 
we get that $\tilde{X}(\omega) \in \bar{C}^t(D)$ for all 
$\omega \in \Omega$ and all
$t < d + \nu - n/p$.
Since $0<\nu<\min\{(n+\epsilon)/p,1\}$
was arbitrary, the claim follows.
\end{proof}

We remark that $1-n/p$ is positive only for $p > n$.
In the case that $n \geq p$, 
we obtain in Theorem~\ref{t:KC-D}
only lower sample differentiability order $\floor{t}$
than the assumed weak differentiability order $d$.

\begin{rem}
\label{r:weaker_differentiability_X}
 The assumptions in Theorem~\ref{t:KC-D} can be weakened. If $d \neq 0$, it is sufficient that~\eqref{e:t:KC-D:E} holds for $|\ga| = d$, and that $X$ has a continuous modification as provided by Theorem~2.3.1 in~\cite{K02} under the assumption that \eqref{e:t:KC-D:E} holds for $\ga = 0$ and some $\epsilon > 0$.
\end{rem}

%
%
We apply Theorem~\ref{t:KC-D} in the following example to a Brownian motion on the interval,
recovering
the classical 
property of H\"older continuity with exponent $\gg < 1/2$.

\begin{example}
	If $X$ is a Brownian motion on the interval~$[0,T] \subset \R^1$, $T< + \infty$,
	then Assumption~\eqref{e:t:KC-D:E} is satisfied
	for $\alpha = 0$,
	any $p \geq 2$, and $\epsilon = p/2 - 1$.
	Thus
	$X$ admits a 
	modification that is locally of class $\bar{C}^t$
	for any 
	$0 < t < \sup_{p \geq 2} (p/2 - 1) / p = 1/2$,
	which is the well-known result.
\end{example}


We are now ready to generalize Theorem~\ref{t:KC-D}
to random fields on manifolds.

\begin{theorem}
\label{t:KC-M}
 Let $M$ be a $C^r$ $n$-manifold, $r > 0$,
 and let $X : \Omega \times M \to \R$ be a random field on $M$.
 Assume that there exist $d \in \N_0$, $p>1$, and $\epsilon \in (0,p]$ 
 such that
 for any chart $(U, \varphi)$ on $M$
 with bounded $\varphi(U) \subset \R^n$,
 there exists $C_\varphi > 0$
 such that
 the weak derivatives of
 $X_\varphi := X \circ \varphi^{-1}$ satisfy 
 $\partial^\ga X_\varphi \in L^p(\gO \times \varphi(U))$ and
  \begin{equation*}
   \E( |\partial^\ga X_\varphi(x) - \partial^\ga X_\varphi(y)|^p)
   \le
   C_\varphi \, |x-y|^{n + \epsilon}
  \end{equation*}
 for all $x,y \in \varphi(U)$ and
 any multi-index $\ga \in \N_0^n$ with $|\ga| \leq d$.
 Then $X$ has a modification that is locally of class $\bar{C}^t$ for all $t < d + \min\{\epsilon/p,1-n/p\}$ with $t \le r$.
\end{theorem}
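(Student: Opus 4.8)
The plan is to reduce the manifold statement to the domain result Theorem~\ref{t:KC-D} chart by chart, following the same two-step structure: first produce a \emph{globally} sample-continuous modification of $X$ on $M$, and only then upgrade its regularity via the Sobolev embedding, so that the second step does not modify the field again. The device that makes the global first step possible is the fact that two continuous modifications of a random field on a separable space coincide, simultaneously at all points, with probability one, so local modifications can be patched. I would fix the atlas as follows: for every $x \in M$ choose a chart whose image is an open Euclidean ball (restricting any chart to a coordinate ball is again a chart since the atlas is maximal, and a ball is a bounded domain of cone type). Because each connected component of a paracompact manifold is second countable, I may extract a countable subfamily $\{(U_i,\varphi_i)\}_{i \in \N}$ covering one component, with each $B_i := \varphi_i(U_i)$ a ball; components are open and disjoint and are handled one at a time, and since the modification property is required only pointwise and the target regularity is local, no single global null set is needed. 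Applying the hypothesis of Theorem~\ref{t:KC-M} to the restricted chart $(U_i,\varphi_i)$ furnishes exactly the assumptions of Theorem~\ref{t:KC-D} for $X \circ \varphi_i^{-1}$ on the cone-type domain $B_i$.

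For the patching I would argue as follows. By Lemma~\ref{l:K-D} applied on each $B_i$, the field $X \circ \varphi_i^{-1}$ has a continuous modification; pulling it back along $\varphi_i$ yields a modification $Z_i$ of $X|_{U_i}$ whose sample $Z_i(\omega,\cdot)$ is continuous for every $\omega$. For each pair $i,j$ the fields $Z_i$ and $Z_j$ are continuous modifications of $X$ on the separable set $U_i\cap U_j$, hence agree at all of its points almost surely; intersecting over the countably many pairs produces one event $\Omega'\in\cF$ of full probability on which all $Z_i$ are mutually consistent on overlaps. Setting $\tilde X(\omega,x):=Z_i(\omega,x)$ for $x\in U_i$ when $\omega\in\Omega'$, and $\tilde X(\omega,\cdot):=0$ otherwise, defines a jointly measurable, everywhere sample-continuous modification $\tilde X$ of $X$ on the component. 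A cleaner variant uses a subordinate partition of unity $\{\psi_i\}$ from Proposition~\ref{p:partof1} and the locally finite sum $\sum_i\psi_i Z_i$, which is automatically a modification because $\sum_i\psi_i\equiv1$, thereby bypassing the overlap-consistency bookkeeping.

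Finally I would run the Sobolev step of Theorem~\ref{t:KC-D} on the now continuous field $\tilde X$. For any chart $(U,\varphi)$ with $\varphi(U)$ a bounded domain of cone type, the moment bound together with Fubini shows, exactly as in the proof of Theorem~\ref{t:KC-D}, that $\E(\|\tilde X\circ\varphi^{-1}\|^p_{W^{d+\nu}_p(\varphi(U))})<\infty$ for every $\nu<\min\{(n+\epsilon)/p,1\}$; here $\tilde X\circ\varphi^{-1}$ and $X\circ\varphi^{-1}$ agree as $L^p$-functions for almost every $\omega$, so the hypothesis transfers to $\tilde X$. Since $\tilde X\circ\varphi^{-1}(\omega)$ is already continuous, Theorem~\ref{t:Wsp-Ct} places it in $\bar C^t(\varphi(U))$ for all $t<d+\nu-n/p$ with no further modification, and letting $\nu$ rise to its supremum yields the exponent $t<d+\min\{\epsilon/p,1-n/p\}$. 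The remaining point is to pass from single-chart regularity to the intrinsic notion of being locally of class $\bar C^t$: given $x$, take $V$ to be a relatively compact neighborhood inside the chart domain $U_i$ constructed above, so that any chart $(U,\varphi)$ with $U\subset V$ satisfies $\tilde X\circ\varphi^{-1}=(\tilde X\circ\varphi_i^{-1})\circ(\varphi_i\circ\varphi^{-1})$, a composition of a $\bar C^t$ function with a $C^r$ transition diffeomorphism having bounded derivatives on $\varphi(V)$; this composition remains $\bar C^t$ precisely because $t\le r$, which is exactly where that constraint is consumed.

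I expect the main obstacle to be the patching step, namely manufacturing a single globally continuous and jointly measurable modification out of the chart-wise ones, since this is where both the countability of the atlas (second countability of the components) and the almost-sure agreement of continuous modifications on separable overlaps are genuinely required; the secondary difficulty is verifying that $\bar C^t$ is invariant under the $C^r$ coordinate changes, which forces $t\le r$ and makes the final conclusion chart-independent.
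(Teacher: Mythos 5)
Your proposal is correct in substance, but it glues the chart-wise constructions differently from the paper, so the two routes are worth comparing. The paper applies Theorem~\ref{t:KC-D} \emph{in full} on each coordinate ball, obtaining chart-wise modifications $Y^\varphi$ of $X_\varphi$ that are already locally of class $\bar{C}^t$, and then glues them in one stroke with a $C^r$ partition of unity from Proposition~\ref{p:partof1}, setting $Y := \sum_{\varphi} \psi_\varphi \, Y^\varphi \circ \varphi$. This sum is locally finite, is a modification of $X$ because at each fixed $x$ only finitely many terms are nonzero and each agrees with $X(x)$ almost surely, and every sample of $Y$ is locally $\bar{C}^t$ (using $r \ge t$); consequently the paper needs no countability of the atlas, no comparison of different chart modifications on overlaps, and no exceptional null sets at the manifold level. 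You instead first manufacture a single globally sample-continuous modification $\tilde X$, by passing to a countable atlas on each (second countable) component and invoking the fact that two continuous modifications agree simultaneously at all points of a separable set almost surely, and only afterwards run the Sobolev step of Theorem~\ref{t:KC-D} chart by chart on $\tilde X$, which then need not be modified again. This mirrors at the manifold level exactly the two-step structure (Lemma~\ref{l:K-D}, then embedding via Theorem~\ref{t:Wsp-Ct}) that the paper uses only at the domain level. Your route buys one canonical modification upgraded in place, at the price of two ingredients the paper does not need: second countability of the components and the overlap-consistency lemma. Your ``cleaner variant'' $\sum_i \psi_i Z_i$ is essentially the paper's construction, just applied to the continuous chart modifications rather than the $\bar{C}^t$ ones.

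Two details you should tighten. First, your Sobolev step places $\tilde X \circ \varphi_i^{-1}(\omega)$ in $\bar{C}^t$ only for $\omega$ outside a null set depending on the chart and on $\nu$; since the paper's definition requires \emph{every} sample of the asserted modification to be locally of class $\bar{C}^t$, you must still intersect these countably many events (over the charts of a component and a sequence $\nu_k$ increasing to $\min\{(n+\epsilon)/p,1\}$) and set the samples to zero on the complementary event, component by component --- the analogue of the factor $\one_{\Omega'}$ in the proof of Theorem~\ref{t:KC-D}. Second, your claim that the transition map $\varphi_i \circ \varphi^{-1}$ has bounded derivatives on $\varphi(U)$ for an \emph{arbitrary} chart $(U,\varphi)$ of the maximal atlas with $U \subset V$ is not automatic, since such a $\varphi$ need not extend nicely to $\overline{V}$; this chart-independence point is, however, glossed at the same level of detail in the paper's own proof (``owing to the fact that $r \geq t$''), so it does not distinguish the two arguments.
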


\begin{proof}
	To obtain the continuous modification
	we first construct a locally finite atlas with 
	coordinate domains that are bounded and of cone type.
	On each of these charts, a modification of $X$
	is provided by Theorem~\ref{t:KC-D}.
	Using a partition of unity we then 
	patch together a modification of~$X$
	with the desired properties.
	
	For each $x \in M$, let $(\tilde{U}_x, \tilde\varphi_x)$ be 
	a chart on $M$ with $x \in \tilde{U}_x$.
	Let $D_x \subset \tilde\varphi_x(\tilde{U}_x)$ be 
	an open ball of positive radius centered at $\tilde\varphi_x(x)$.
	Define $U_x := \tilde\varphi_x^{-1}(D_x)$ and $\varphi_x := \tilde\varphi_x|_{U_x}$.
	Let $\cA := \{ (U_x, \varphi_x) : x \in M \}$ be the resulting atlas for $M$,
	which we will index
	by
	$\Phi := \{ \varphi : (U, \varphi) \in \cA \}$.
	
	Now, for each $(U_\varphi, \varphi) \in \cA$,
	the coordinate domain $\varphi(U_\varphi)$
	is a bounded domain with smooth boundary,
	in particular of cone type.
	With our assumptions on $X_\varphi$,
	Theorem~\ref{t:KC-D}
	provides a modification
	$Y^\varphi$ of the random field
	$X_\varphi : \Omega \times \varphi(U_\varphi) \to \R$
	on $\varphi(U_\varphi)$,
	which is 
	locally of class $\bar{C}^t$
	for any fixed $t < d + \min\{\epsilon/p,1-n/p\}$,
	for each $\varphi \in \Phi$.
	
	Let $\{ \psi_\varphi \}_{ \varphi \in \Phi }$ 
	be a $C^r$ partition of unity
	subordinate to $\{ U_\varphi \}_{\varphi \in \Phi}$,
	which exists by Proposition~\ref{p:partof1}.
	Define $Y : \Omega \times M \to \R$
	by
	$Y := \sum_{\varphi \in \Phi} \psi_\varphi Y^\varphi \circ \varphi$.
	Since the covering $\{ \mathop{\mathrm{supp}} \psi_\varphi \}_{ \varphi \in \Phi }$
	is locally finite,
	the sum is well-defined on a neighborhood of any $x \in M$.
	Furthermore, $Y$ is a random field on~$M$ because all $\varphi \in \Phi$ are $C^r$ diffeomorphisms and therefore at least continuous.
	Moreover, it is a modification of~$X$
	by the properties of the partition of unity.
	Owing to the fact that $r \geq t$,
	the random field
	$Y$
	is locally of class $\bar{C}^t$. 
\end{proof}

We finish this section with two comments.
First,
since we only used the assumptions on the random field on the charts to apply Theorem~\ref{t:KC-D}, it is clear that Remark~\ref{r:weaker_differentiability_X} carries over to Theorem~\ref{t:KC-M}.
Second,
for an example of random fields on manifolds, we refer the reader to \cite{LS13}.
Therein,
isotropic Gaussian random fields
on
the unit sphere in~$\R^3$
are considered
and
sample regularity is obtained by direct calculations.

\section*{Acknowledgment}

The work was supported in part by ERC AdG no.~247277. 
The authors thank Sonja Cox, Sebastian Klein, Markus Knopf, J\"urgen Potthoff, and Christoph Schwab
for fruitful discussions and helpful comments,
as well as Ren\'e Schilling for pointing out reference~\cite{Schilling2000-ExpoMath}. 
The first author acknowledges the hospitality of 
the Seminar for Applied Mathematics
during summer 2013.

\bibliographystyle{plain}
\bibliography{kc}


\end{document}